\theoremstyle{plain}
\newtheorem{theorem}{Theorem}
\newtheorem{lemma}{Lemma}
\begin{document}
	\sloppy
	\title[Isomorphisms of groups of periodic infinite matrices]
	{Isomorphisms of groups of periodic infinite matrices}
	\author
	{Oksana Bezushchak}
	\address{Oksana Bezushchak: Faculty of Mechanics and Mathematics, Taras Shevchenko National University of Kyiv, Volodymyrska, 60, Kyiv 01033, Ukraine}
	\email{bezushchak@knu.ua}

	\keywords{infinite matrix; isomorphism; locally matrix algebra; Steinitz number}
	\subjclass[2020]{20E34, 20H20}

	\maketitle
	
	\begin{abstract}
	We describe isomorphisms of groups of several periodic  infinite matrices and isomorphisms of groups of invertible elements of unital locally matrix algebras.		
	\end{abstract}

	\section*{Introduction}

Let $\mathbb{N}$ be the set of positive integers. A   \emph{Steinitz number} \cite{ST}   is an infinite formal
product of the form
$$ \prod_{p\in \mathbb{P}} p^{r_p}, $$
where $ \mathbb{P} $ is the set of all primes, $ r_p \in  \mathbb{N} \cup \{0,\infty\} $ for all $p\in \mathbb{P}$.
We can define the product of two Steinitz numbers by the rule:
$$ \prod_{p\in \mathbb{P}} p^{r_p} \cdot  \prod_{p\in \mathbb{P}} p^{k_p}= \prod_{p\in \mathbb{P}} p^{r_p+k_p}, \quad  r_p, k_p \in  \mathbb{N} \cup \{0,\infty\},  $$
where we assume that
$$r_p+k_p=\begin{cases}
	r_p+k_p & \text{if  $r_p < \infty$ and $k_p < \infty$, } \\
	\infty & \text{in other cases.}
\end{cases}$$
The Steinitz number $s_2$ \emph{divides} the Steinitz number $s_1$ (denote as $s_2|s_1$) if there exists the Steinitz number $ s_3\in\mathbb{SN} $ such that $s_1 = s_2 \cdot s_3.$

Let $\mathbb{F} $ be a field. In what follows we always assume that $\text{char}\, \mathbb{F} \not = 2,3.$ We call an infinite $(\mathbb{N} \times \mathbb{N})$-matrix $A$ over the field $\mathbb{F} $ \emph{periodic} if it is block-diagonal $A=\text{diag}(a,a,\ldots),$ where $a$ is an $( n \times n)$-matrix for some $n\in \mathbb{N}$. The number $n$ is called a \emph{period} of the matrix $A$ and the matrix $A$ is called $n$-\emph{periodic}.

Let $M_n ^p (\mathbb{F} )$ be the algebra of all $n$-periodic $(\mathbb{N} \times \mathbb{N})$-matrices, and let $M_n  (\mathbb{F} )$ be the algebra of all $(n\times n)$-matrices over a field $\mathbb{F} $. Clearly, $$M_n ^p (\mathbb{F} ) \cong M_n(\mathbb{F}), \quad \text{and} \quad M_n ^p (\mathbb{F} )\subseteq M_m ^p (\mathbb{F} ) \quad \text{if and only if} \quad n \quad \text{divides} \quad  m.$$ Let $GL_n^p(\mathbb{F})$ be the group of all invertible matrices in  $M_n ^p (\mathbb{F} ),$ and let $GL_n(\mathbb{F})$ be the group of all invertible matrices in  $M_n  (\mathbb{F} ).$  It is easy to see that $GL_n^p(\mathbb{F}) \cong GL_n(\mathbb{F}).$

For a Steinitz number  $s$, we consider the algebra $$ \quad M_s ^p (\mathbb{F} ) =\bigcup_{n|s} M_n ^p (\mathbb{F} ), $$ and the group $$ GL_s^p(\mathbb{F})=\bigcup_{n|s} GL_n^p(\mathbb{F})$$ that consists of all invertible elements of the algebra $M_s ^p (\mathbb{F} ).$ Let $$SL_n(\mathbb{F})=[GL_n(\mathbb{F}),GL_n(\mathbb{F})],$$ $$SL_n^p(\mathbb{F})= [\,GL_n^p(\mathbb{F}),GL_n^p(\mathbb{F})\,] \quad \text{and} \quad  SL_s^p(\mathbb{F})= [\,GL_s^p(\mathbb{F}),GL_s^p(\mathbb{F})\,] $$ be  commutator subgroups of  groups $GL_n(\mathbb{F}),$ $GL_n^p(\mathbb{F})$ and  $GL_s^p(\mathbb{F}),$  respectively.

If $n_1 < n_2 < \cdots $ is a sequence of positive integers such that $n_i | n_{i+1},$ $i\geq 1,$ and $s$ is the least common multiple of $n_1,$ $n_2,$ $\ldots,$ then $$ GL_{n_1}^p(\mathbb{F}) \subset GL_{n_2}^p(\mathbb{F})\subset \cdots , \quad \bigcup_{i \geq 1 } GL_{n_i}^p(\mathbb{F})= GL_{s}^p(\mathbb{F});$$  $$ SL_{n_1}^p(\mathbb{F}) \subset SL_{n_2}^p(\mathbb{F})\subset \cdots , \quad \bigcup_{i \geq 1 } SL_{n_i}^p(\mathbb{F})= SL_{s}^p(\mathbb{F}).$$ For more information about groups $GL_{s}^p(\mathbb{F}),$ $SL_{s}^p(\mathbb{F}),$ see \cite{Bez_Phys_Math,Bez_Sushch}.

Recall some definitions and facts concerning locally matrix algebras; see \cite{BezReportsNAS,14,BezOl_2,BezOl}.

An associative $\mathbb{F}$-algebra $A$ with the unit 1 is said to be a \textit{unital locally matrix algebra} (see \cite{Kurosh}) if for an arbitrary finite collection of elements $a_1,\ldots,a_t \in A$ there exists a subalgebra $A' \subset A$ such that $1, a_1,\ldots, a_t \in A'$ and $A'\cong M_n(\mathbb{F})$ for some $n \in \mathbb{N}.$ 

For a unital locally matrix algebra $A$, let $D(A)$ be the set of all positive integers $n$ such that there exists a subalgebra $A',$  $1 \in A' \subset A,$ $A' \cong M_n(\mathbb{F}).$ The least common multiple of the set $ D(A)$ is called the \textit{Steinitz number} $\mathbf{st}(A)$ \textit{of the algebra} $A$; see \cite{BezOl}.

J.~G.~Glimm \cite{Glimm} proved that every  countable-dimensional unital locally matrix algebra is uniquely determined by its Steinitz number. Remark, that the algebra $M_s^p(\mathbb{F})$ is a  countable-dimensional unital locally matrix algebra and  $\mathbf{st}(M_s^p(\mathbb{F}))=s.$ 

Let $A$ be a unital locally matrix algebra over a field $\mathbb{F}$. Let us denote by the symbol $A^{*}$ the group  of invertible elements of the algebra $A$ and let $[A^{*},A^{*}]$ be its commutator subgroup. Our aim now is description of isomorphisms of the group $A^{*}.$

Let $R,$ $S$ be rings. A mapping $\varphi:R \rightarrow S$ is called an \textit{anti-isomorphism} if \begin{enumerate}
	\item[(1)] $\varphi$ is an isomorphism of additive groups of $R$ and $S,$
	\item[(2)] $\varphi(ab)=\varphi(b)\varphi(a)$ for arbitrary elements $a,b\in R.$ 
\end{enumerate}

\begin{theorem}\label{Th_1_per_matrix} Let $A$ and $B$ be unital locally matrix $\mathbb{F}$-algebras. If  groups $[A^{*},A^{*}]$ and $[B^{*},B^{*}]$ are isomorphic, then  rings $A$ and $B$ are isomorphic or anti-isomorphic. Moreover, for an arbitrary isomorphism $\varphi:[A^{*},A^{*}] \rightarrow [B^{*},B^{*}]$ either there exists an isomorphism  of rings $\theta_1 :A \rightarrow B$ such that $\varphi$ is the restriction of  $\theta_1$ to $[A^{*},A^{*}]$ or there exists an anti-isomorphism of rings $\theta_2 :A \rightarrow B$ such that for an arbitrary element $g\in [A^{*},A^{*}]$ we have $$ \varphi(g) = \theta_2(g^{-1}).$$
\end{theorem}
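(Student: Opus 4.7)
My strategy is to exhaust $A$ and $B$ by finite-dimensional unital matrix subalgebras, apply the classical Hua--Dieudonn\'e description of isomorphisms of $SL_n(\mathbb{F})$ ($n\geq 3$) at each level, and glue the resulting ring maps by a direct-limit argument. Concretely, using Glimm's theorem I would write $A=\bigcup_{i\geq 1} A_i$ and $B=\bigcup_{j\geq 1} B_j$ as ascending unions of unital subalgebras with $A_i\cong M_{n_i}(\mathbb{F})$, $B_j\cong M_{m_j}(\mathbb{F})$, $n_i\mid n_{i+1}$, and $m_j\mid m_{j+1}$. Then $[A^{*},A^{*}]=\bigcup_i[A_i^{*},A_i^{*}]\cong\bigcup_i SL_{n_i}(\mathbb{F})$, and similarly for $B$. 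For all sufficiently large $i$ we have $n_i\geq 3$, so $SL_{n_i}(\mathbb{F})$ is perfect and simple modulo its (finite) centre.

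The decisive technical step is to show that, for each such $i$, the image $\varphi([A_i^{*},A_i^{*}])$ lies inside $[B_{j(i)}^{*},B_{j(i)}^{*}]$ for a single index $j(i)$. The difficulty is that $SL_{n_i}(\mathbb{F})$ need not be finitely generated when $\mathbb{F}$ is infinite, so one cannot simply track a finite generating set across the $B$-filtration. I would exploit the rigidity of matrix units: fixing a matrix unit system $e_{pq}^{(i)}\in A_i$, the finitely many transvections $1+e_{pq}^{(i)}$ with $p\neq q$ satisfy distinctive Steinberg-type commutation relations that $\varphi$ must preserve; combined with an analysis of the one-parameter root subgroups $\{\varphi(1+\alpha e_{pq}^{(i)}):\alpha\in\mathbb{F}\}$ and their mutual centralizers inside $[B^{*},B^{*}]$, these should be pinned down inside some $[B_{j(i)}^{*},B_{j(i)}^{*}]$. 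This is where I expect the real work to lie.

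Once localization is in hand, the classical Hua--Dieudonn\'e theorem applied to the abstract embedding $\varphi_i\colon SL_{n_i}(\mathbb{F})\hookrightarrow SL_{m_{j(i)}}(\mathbb{F})$ produces either a ring isomorphism $\theta_i\colon A_i\to B_{j(i)}$ with $\varphi_i(g)=\theta_i(g)$, or a ring anti-isomorphism $\theta_i\colon A_i\to B_{j(i)}$ with $\varphi_i(g)=\theta_i(g^{-1})$. The uniqueness clause in that theorem, together with consistency across a nested pair $A_i\subset A_{i+1}$ (a mixed iso/anti-iso choice on such a pair would contradict the resulting formula on $A_i$), forces a single type for all sufficiently large $i$ and yields $\theta_{i+1}|_{A_i}=\theta_i$. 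Taking the direct limit produces the desired ring (anti-)isomorphism $\theta\colon A\to B$ with the stated relation to $\varphi$, and bijectivity follows by applying the same construction to $\varphi^{-1}$.
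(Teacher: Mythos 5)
Your plan has two genuine gaps, and it also does not cover the full generality of the statement. First, the localization step --- showing that $\varphi\big([A_i^{*},A_i^{*}]\big)$ lands inside a single finite level $[B_{j(i)}^{*},B_{j(i)}^{*}]$ --- is precisely the hard part, and you leave it as a hope. When $\mathbb{F}$ is infinite each root subgroup $\{1+\alpha e_{pq}^{(i)}:\alpha\in\mathbb{F}\}$ is an infinite group, and there is no a priori reason its image is confined to one layer of the $B$-filtration; tracking the finitely many transvections $1+e_{pq}^{(i)}$ and their Steinberg relations does not pin down the full one-parameter subgroups, and the centralizer analysis you gesture at is not carried out. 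Second, even granting localization, the Hua--Dieudonn\'e/Schreier--van der Waerden theory classifies \emph{isomorphisms} between special linear groups, not embeddings $SL_{n_i}(\mathbb{F})\hookrightarrow SL_{m_{j(i)}}(\mathbb{F})$ with $m_{j(i)}>n_i$; the conclusion you draw --- a ring isomorphism $\theta_i\colon A_i\to B_{j(i)}$ --- is impossible on dimension grounds, and already the diagonal embedding $g\mapsto\mathrm{diag}(g,g)$ shows that an embedding of $SL$'s need not come from a ring (anti-)isomorphism. Classifying such embeddings is a substantially harder, essentially representation-theoretic, problem. Third, Theorem~\ref{Th_1_per_matrix} concerns arbitrary unital locally matrix algebras; Glimm's theorem and the chain decomposition $A=\bigcup_i A_i$ with $A_i\cong M_{n_i}(\mathbb{F})$ are available only in the countable-dimensional case, so your argument would at best address Theorem~\ref{Th_2_per_matrix}.

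The paper's proof avoids all of this by a different reduction: it fixes a single $n\geq 4$ with $1\in M_n(\mathbb{F})\subset A$, writes $A\cong M_n(A')$ where $A'$ is the centralizer of $M_n(\mathbb{F})$ (Wedderburn), proves that $[A^{*},A^{*}]=E_n(A')$ (Lemma~\ref{lemma1_per_matrix}), and then applies the Golubchik--Mikhalev--Zelmanov theorem on isomorphisms of elementary linear groups over arbitrary rings containing $\frac{1}{6}$, using simplicity of $A$ and $B$ to rule out nontrivial central idempotents. This one application of a ring-theoretic rigidity theorem replaces the localization-and-gluing scheme entirely; to make your route work you would essentially have to reprove a special case of that theorem.
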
	
 
If  algebras $A$ and $B$ are countable-dimensional, then Theorem \ref{Th_1_per_matrix} can be made more precise. In this case, without loss of generality, we can assume that $A=M_s^p(\mathbb{F}),$ where $s$ is the Steinitz number of the algebra $A.$ The algebra $M_s^p(\mathbb{F})$ is invariant with respect to transpose $t$, which is an anti-isomorphism.

\begin{theorem}\label{Th_2_per_matrix} Let $A$ and $B$ be  countable-dimensional unital locally matrix $\mathbb{F}$-algebras. If  groups $[A^{*},A^{*}]$ and $[B^{*},B^{*}]$ are isomorphic, then  rings $A$ and $B$ are isomorphic. Moreover,  an arbitrary isomorphism $\varphi:[A^{*},A^{*}] \rightarrow [B^{*},B^{*}]$ either extends to  an isomorphism of rings $A \rightarrow B$ or there exists an isomorphism of rings  $\theta :A \rightarrow B$ such that for an arbitrary element $g\in [A^{*},A^{*}]$ we have $$ \varphi(g) = \theta\big( (g^{-1})^t\big).$$
\end{theorem}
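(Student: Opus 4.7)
The plan is to derive Theorem~\ref{Th_2_per_matrix} from Theorem~\ref{Th_1_per_matrix} by exploiting the existence of a transpose anti-automorphism, which is available precisely because of the countable-dimensional hypothesis. The point is that Theorem~\ref{Th_1_per_matrix} gives a dichotomy between an isomorphism and an anti-isomorphism; composing the anti-isomorphism with an anti-automorphism of the target (or source) converts it into an isomorphism, and the transpose supplies such an anti-automorphism in the countable-dimensional setting.

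Concretely, I would apply Theorem~\ref{Th_1_per_matrix} to the given $\varphi$. In the first alternative $\varphi$ already extends to a ring isomorphism $\theta_1 : A \to B$, so there is nothing left to prove. In the second alternative we have an anti-isomorphism $\theta_2 : A \to B$ with $\varphi(g) = \theta_2(g^{-1})$. Using Glimm's uniqueness theorem together with the remark preceding the statement, I may assume $A = M_s^p(\mathbb{F})$ where $s = \mathbf{st}(A)$, and then the matrix transpose $t : A \to A$, $a \mapsto a^t$, is an involutive anti-automorphism of $A$ (block-diagonal matrices with block $a$ are sent to block-diagonal matrices with block $a^t$, so periodicity is preserved). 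Set $\theta := \theta_2 \circ t : A \to B$. Additivity and bijectivity are clear, and for $a,b \in A$
$$
\theta(ab) \;=\; \theta_2\bigl((ab)^t\bigr) \;=\; \theta_2(b^t a^t) \;=\; \theta_2(a^t)\,\theta_2(b^t) \;=\; \theta(a)\,\theta(b),
$$
so $\theta$ is a ring isomorphism. Using $t^2 = \mathrm{id}_A$, for every $g \in [A^{*},A^{*}]$ I obtain
$$
\varphi(g) \;=\; \theta_2(g^{-1}) \;=\; \theta_2\bigl(((g^{-1})^t)^t\bigr) \;=\; \theta\bigl((g^{-1})^t\bigr),
$$
which is precisely the second alternative of Theorem~\ref{Th_2_per_matrix}. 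In either case $A$ and $B$ are isomorphic as rings.

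The substantive content is packaged in Theorem~\ref{Th_1_per_matrix}; assuming that statement, the present reduction is essentially a one-line observation about composing anti-homomorphisms. The only place where the countable-dimensional hypothesis is actually used is in Glimm's theorem, which lets us identify $A$ with the concrete algebra $M_s^p(\mathbb{F})$ and thereby exhibit a transpose-type anti-automorphism. Without that identification one cannot in general promote the anti-isomorphism to an isomorphism — this is why the stronger conclusion (isomorphism rather than anti-isomorphism) fails for arbitrary unital locally matrix algebras and forces the two separate statements.
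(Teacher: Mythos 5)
Your proposal is correct and follows essentially the same route as the paper: apply Theorem~\ref{Th_1_per_matrix}, use Glimm's theorem to identify $A$ with $M_s^p(\mathbb{F})$ so that the transpose anti-automorphism is available, and convert the anti-isomorphism $\theta_2$ into the isomorphism $\theta = \theta_2 \circ t$ with $\varphi(g) = \theta\bigl((g^{-1})^t\bigr)$. This is exactly the paper's argument (there the composite is written $\theta'(a) = \theta(a^t)$), so nothing further is needed.
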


If countable-dimensional unital locally matrix algebras are isomorphic as rings then they are isomorphic as $\mathbb{F}$-algebras; see Lemma \ref{lemma2_per_matrix} below. Therefore,  Theorem \ref{Th_2_per_matrix} implies  

\begin{theorem}\label{Th_3_per_matrix} Groups $SL_{s_1}^p(\mathbb{F})$ and $SL_{s_2}^p(\mathbb{F})$ are isomorphic if and only if $s_1 =s_2.$
\end{theorem}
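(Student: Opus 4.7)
The ``if'' direction is trivial, since equal Steinitz numbers give literally the same algebras and hence the same commutator subgroups. For the ``only if'' direction, the plan is to chain together Theorem~\ref{Th_2_per_matrix}, the referenced Lemma~\ref{lemma2_per_matrix}, and Glimm's classification.

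First I would observe that, by definition, $SL_{s_i}^p(\mathbb{F}) = [\,GL_{s_i}^p(\mathbb{F}),GL_{s_i}^p(\mathbb{F})\,] = [\,A_i^{*},A_i^{*}\,]$, where $A_i := M_{s_i}^p(\mathbb{F})$ is the countable-dimensional unital locally matrix algebra with Steinitz number $\mathbf{st}(A_i)=s_i$. So an isomorphism $SL_{s_1}^p(\mathbb{F})\cong SL_{s_2}^p(\mathbb{F})$ is exactly an isomorphism of the commutator subgroups $[\,A_1^{*},A_1^{*}\,]\cong[\,A_2^{*},A_2^{*}\,]$.

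Next I would invoke Theorem~\ref{Th_2_per_matrix}: because $A_1,A_2$ are countable-dimensional unital locally matrix $\mathbb{F}$-algebras, such an isomorphism of commutator subgroups forces $A_1$ and $A_2$ to be isomorphic as rings. Then by Lemma~\ref{lemma2_per_matrix} (cited in the paragraph before Theorem~\ref{Th_3_per_matrix}), a ring isomorphism between countable-dimensional unital locally matrix algebras can be upgraded to an $\mathbb{F}$-algebra isomorphism, so $A_1\cong A_2$ as $\mathbb{F}$-algebras.

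Finally I would apply Glimm's theorem, stated in the excerpt, according to which a countable-dimensional unital locally matrix algebra is determined up to $\mathbb{F}$-algebra isomorphism by its Steinitz number. Hence $s_1=\mathbf{st}(A_1)=\mathbf{st}(A_2)=s_2$, completing the proof. Here essentially no new work is required: the substance is all in Theorem~\ref{Th_2_per_matrix}, and the only obstacle, a very mild one, is checking that one is entitled to apply Lemma~\ref{lemma2_per_matrix} after Theorem~\ref{Th_2_per_matrix} delivers only a ring isomorphism rather than an $\mathbb{F}$-linear one; the anti-isomorphism alternative in Theorem~\ref{Th_1_per_matrix} has already been absorbed into the ``$\theta$ composed with inverse-transpose'' branch of Theorem~\ref{Th_2_per_matrix}, so in both branches we do obtain a genuine ring isomorphism $A_1\to A_2$.
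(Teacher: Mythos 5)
Your proposal is correct and follows essentially the same route as the paper, which derives Theorem~\ref{Th_3_per_matrix} directly from Theorem~\ref{Th_2_per_matrix} together with Lemma~\ref{lemma2_per_matrix}. The only cosmetic remark is that the final step ($A_1\cong A_2$ as $\mathbb{F}$-algebras implies $s_1=s_2$) does not really need Glimm's theorem: the Steinitz number $\mathbf{st}(A)$ is an isomorphism invariant by its very definition, Glimm's result being the converse implication.
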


For description of isomorphisms between groups $GL_s^p(\mathbb{F}),$ we need to introduce the concept of a central homothety.

For a unital $\mathbb{F}$-algebra $A,$ by a \emph{central homothety} of its multiplicative group $A^{*}$ we mean a multiplicative homomorphism $$ A^{*} \big/ [A^{*},A^{*}] \rightarrow \mathbb{F}^{*}.$$

\begin{theorem}\label{Th_4_per_matrix} Let $A$ and $B$ be  unital locally matrix $\mathbb{F}$-algebras. For an arbitrary isomorphism of multiplicative groups $\varphi: A^{*} \rightarrow B^{*}$ there  exists a central homothety $\chi: A^{*} \big/ [A^{*},A^{*}] \rightarrow \mathbb{F}^{*}$ and an isomorphism of rings $\theta_1: A \rightarrow B$ such that  $$ \varphi(g) = \chi(g)\theta_1(g), \quad g\in A^{*},$$ or an anti-isomorphism  $\theta_2 :A \rightarrow B$ such that   $$ \varphi(g) =\chi(g) \theta_2(g^{-1}), \quad g\in A^{*}.$$
\end{theorem}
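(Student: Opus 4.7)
The plan is to apply Theorem \ref{Th_1_per_matrix} to the restriction $\varphi|_{[A^*,A^*]}$ and then describe $\varphi$ on all of $A^*$ as a central twist of the ring (anti-)isomorphism it supplies. The restriction is an isomorphism $[A^*,A^*]\to[B^*,B^*]$, so Theorem \ref{Th_1_per_matrix} delivers either a ring isomorphism $\theta_1:A\to B$ agreeing with $\varphi$ on $[A^*,A^*]$, or an anti-isomorphism $\theta_2:A\to B$ with $\varphi(g)=\theta_2(g^{-1})$ on $[A^*,A^*]$. In the second alternative the map $\tilde\varphi(g):=\theta_2(g^{-1})=\theta_2(g)^{-1}$ is a genuine group homomorphism $A^*\to B^*$ (the anti-multiplicativity of $\theta_2$ is reversed by the inversion) and coincides with $\varphi$ on $[A^*,A^*]$. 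So in both cases I have an auxiliary group isomorphism $\Theta:A^*\to B^*$ (equal to $\theta_1$ or $\tilde\varphi$) that coincides with $\varphi$ on commutators.

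Next, set $\chi(g):=\Theta(g)^{-1}\varphi(g)\in B^*$. The crux is to show that $\chi(g)$ is a scalar from $\mathbb{F}^*$. For any $g\in A^*$ and $h\in[A^*,A^*]$, the commutator $[g,h]$ lies again in $[A^*,A^*]$, hence $\varphi([g,h])=\Theta([g,h])$; expanding both sides and cancelling using $\varphi(h)=\Theta(h)$ yields $\chi(g)\Theta(h)=\Theta(h)\chi(g)$. Thus $\chi(g)$ centralises $\Theta([A^*,A^*])=[B^*,B^*]$. Now $[B^*,B^*]$ contains $SL_N(\mathbb{F})$ for every $N$ with $M_N(\mathbb{F})\subseteq B$, and the centraliser of $SL_N(\mathbb{F})$ inside $M_N(\mathbb{F})$ is $\mathbb{F}\cdot I_N$ by a standard argument with elementary matrices; since the locally matrix property embeds any given element of $B$ into some such $M_N(\mathbb{F})$, the centraliser of $[B^*,B^*]$ in $B$ is the centre $\mathbb{F}\cdot 1_B$. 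Hence $\chi(g)\in\mathbb{F}^*$.

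With $\chi(g)$ central, the identity $\varphi(gh)=\varphi(g)\varphi(h)$ immediately gives $\chi(gh)=\chi(g)\chi(h)$, and $\chi$ is trivial on $[A^*,A^*]$ by construction, so $\chi$ descends to a central homothety $A^*/[A^*,A^*]\to\mathbb{F}^*$. The identity $\varphi(g)=\chi(g)\Theta(g)$ then specialises to $\varphi(g)=\chi(g)\theta_1(g)$ in the first case and $\varphi(g)=\chi(g)\theta_2(g^{-1})$ in the second, as required. The main technical point is the centraliser computation for $[B^*,B^*]$, which pins down the range of $\chi$; everything else is a formal manipulation with commutators on top of Theorem \ref{Th_1_per_matrix}, so once that step is in place the argument essentially writes itself.
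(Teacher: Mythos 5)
Your argument is correct, but it takes a genuinely different route from the paper. The paper does not pass through Theorem \ref{Th_1_per_matrix} at all: it first proves Lemma \ref{lemma3_per_matrix}, identifying $A^{*}$ with $GE_n(A')$ for the centralizer $A'$ of a unital subalgebra $M_n(\mathbb{F})\subset A$, $n\geq 4$, and then invokes the $GE_n$-version of the Golubchik--Mikhalev--Zelmanov theorem (Theorem \ref{GMZ_2}), which already packages the central factor as a homomorphism $\chi$ into the center of $GE_m(B')$; simplicity of $A$ and $B$ kills the idempotents, and the center of $B$ being $\mathbb{F}\cdot 1_B$ identifies $\chi$ with a central homothety. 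You instead use only the $E_n$-version (through Theorem \ref{Th_1_per_matrix} applied to $\varphi|_{[A^{*},A^{*}]}$) and manufacture $\chi$ by hand: setting $\chi(g)=\Theta(g)^{-1}\varphi(g)$, the commutator identity $\varphi(ghg^{-1}h^{-1})=\Theta(ghg^{-1}h^{-1})$ for $h\in[A^{*},A^{*}]$ correctly forces $\chi(g)$ to centralize $[B^{*},B^{*}]$, and your observation that the centralizer of $[B^{*},B^{*}]$ in $B$ is $\mathbb{F}\cdot 1_B$ (via transvections in each unital $M_N(\mathbb{F})\subseteq B$) pins $\chi(g)$ to the scalars; centrality then makes $\chi$ multiplicative and trivial on commutators. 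What your route buys is economy of input --- you need neither Lemma \ref{lemma3_per_matrix} nor the stronger $GE_n$ form of the isomorphism theorem --- at the price of an extra (elementary) centralizer computation; the paper's route is shorter once Theorem \ref{GMZ_2} is accepted as a black box. Both arguments are sound.
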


As above, in the countable-dimensional case we can be more precise.

\begin{theorem}\label{Th_5_per_matrix} Let $A$ and $B$ be countable-dimensional unital locally matrix $\mathbb{F}$-algebras, and let $s$ be the Steinitz number of $A$. For an arbitrary isomorphism of multiplicative groups $\varphi: A^{*} \rightarrow B^{*}$ there  exists a central homothety $\chi:A^{*} \big/ [A^{*},A^{*}] \rightarrow \mathbb{F}^{*}$ and an isomorphism of rings $\theta: A \rightarrow B$ such that $$ \varphi(g) = \chi(g)\theta(g) \quad \text{for all} \quad  g\in A^{*}, \quad \text{or}$$    $$ \varphi(g) =\chi(g) \theta\big( (g^{-1})^t\big)  \quad \text{for all} \quad  g\in A^{*}.$$
\end{theorem}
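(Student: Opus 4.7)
The plan is to derive Theorem~\ref{Th_5_per_matrix} from Theorem~\ref{Th_4_per_matrix} by exploiting the transpose anti-automorphism that is available on every countable-dimensional unital locally matrix algebra. Applying Theorem~\ref{Th_4_per_matrix} to $\varphi: A^{*} \to B^{*}$ produces a central homothety $\chi: A^{*}/[A^{*},A^{*}] \to \mathbb{F}^{*}$ together with either an isomorphism of rings $\theta_{1}: A \to B$ satisfying $\varphi(g) = \chi(g)\theta_{1}(g)$ for all $g \in A^{*}$, or an anti-isomorphism $\theta_{2}: A \to B$ satisfying $\varphi(g) = \chi(g)\theta_{2}(g^{-1})$ for all $g \in A^{*}$. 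In the first case I would simply set $\theta = \theta_{1}$ and the first alternative of Theorem~\ref{Th_5_per_matrix} holds.

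In the second case I would invoke Glimm's classification, which guarantees that countable-dimensional unital locally matrix $\mathbb{F}$-algebras are determined up to $\mathbb{F}$-algebra isomorphism by their Steinitz number. Since $\mathbf{st}(A) = s$, fix an identification $A \cong M_{s}^{p}(\mathbb{F})$; then the transpose $t$ becomes an honest anti-automorphism of $A$, and this identification is also what gives meaning to the expression $(g^{-1})^{t}$ appearing in the statement. I would then define
\[
\theta := \theta_{2} \circ t \colon A \to B,
\]
which, as a composition of two anti-isomorphisms, is a ring isomorphism. A one-line verification using that $t$ is an involution yields
\[
\chi(g)\,\theta\bigl((g^{-1})^{t}\bigr) \;=\; \chi(g)\,\theta_{2}\bigl(t((g^{-1})^{t})\bigr) \;=\; \chi(g)\,\theta_{2}(g^{-1}) \;=\; \varphi(g),
\]
so that $\varphi$ lies in the second alternative of Theorem~\ref{Th_5_per_matrix}.

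The genuine conceptual work is packaged inside Theorem~\ref{Th_4_per_matrix}; here the only additional input is the concrete anti-automorphism $t$ on $A$, which is precisely what the countable-dimensionality hypothesis makes available (through Glimm and the observation that $M_{s}^{p}(\mathbb{F})$ is transpose-invariant). I therefore do not anticipate serious obstacles. The only mild subtlety is that the transpose on $A$ depends on the choice of isomorphism $A \cong M_{s}^{p}(\mathbb{F})$, but any two such choices differ by an $\mathbb{F}$-algebra automorphism of $A$, which can be absorbed into $\theta$ without changing the conclusion. This dependence on a model is also the reason Theorem~\ref{Th_4_per_matrix} cannot be sharpened in the general, possibly uncountable-dimensional, setting, where no canonical anti-automorphism need exist.
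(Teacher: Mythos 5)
Your proposal is correct and follows essentially the same route as the paper: the paper obtains Theorem \ref{Th_5_per_matrix} from Theorem \ref{GMZ_2} via Lemma \ref{lemma3_per_matrix} and simplicity (i.e., Theorem \ref{Th_4_per_matrix}), and then upgrades the anti-isomorphism case exactly as in the proof of Theorem \ref{Th_2_per_matrix}, by identifying $A$ with $M_s^p(\mathbb{F})$ and composing with the transpose. Your verification that $\theta=\theta_2\circ t$ is an isomorphism satisfying $\chi(g)\,\theta\bigl((g^{-1})^t\bigr)=\varphi(g)$ is precisely the paper's (implicit) argument.
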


Our final gool is description of the group of automorphisms of $SL_s^p(\mathbb{F}).$  The automorphisms group $\text{Aut}_{\mathbb{F}}\big(M_s^p(\mathbb{F})\big)$ of the $\mathbb{F}$-algebra  $M_s^p(\mathbb{F})$ has been described in \cite{14}.

\begin{theorem}\label{Th_6_per_matrix} Let $H$ be the cyclic group of order $2$ generated by the  automorphism $\psi:g \mapsto (g^{-1})^t,$ $g\in SL_s^p(\mathbb{F}).$ Then $$\emph{\text{Aut}}\big(SL_s^p(\mathbb{F})\big)=H\cdot \emph{\text{Aut}}_{\mathbb{F}}\big(M_s^p(\mathbb{F})\big)\cdot \emph{\text{Aut}}(\mathbb{F}).$$
\end{theorem}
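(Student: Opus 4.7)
The plan is to reduce to Theorem \ref{Th_2_per_matrix}, then decompose ring automorphisms of $M_s^p(\mathbb{F})$ into $\mathbb{F}$-linear and Galois parts, and finally rearrange the resulting product into the stated order. Let $\sigma \in \text{Aut}(SL_s^p(\mathbb{F}))$. Applying Theorem \ref{Th_2_per_matrix} with $A=B=M_s^p(\mathbb{F})$ (which is countable-dimensional) gives two cases: either $\sigma$ extends to a ring automorphism $\theta$ of $M_s^p(\mathbb{F})$, or $\sigma(g)=\theta((g^{-1})^t)=\theta(\psi(g))$ for some such $\theta$. Since $\psi(g)=(g^{-1})^t$ is the composition of two anti-automorphisms that each preserve $SL_s^p(\mathbb{F})$, it is an involutive group automorphism of $SL_s^p(\mathbb{F})$, so the two cases become $\sigma=\theta|_{SL_s^p(\mathbb{F})}$ and $\sigma=\theta|_{SL_s^p(\mathbb{F})}\circ\psi$, respectively.

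Next I would show that any ring automorphism $\theta$ of $M_s^p(\mathbb{F})$ factors as $\theta=\tau\circ\hat{\alpha}$ with $\tau\in\text{Aut}_{\mathbb{F}}(M_s^p(\mathbb{F}))$ and $\alpha\in\text{Aut}(\mathbb{F})$, where $\hat{\alpha}$ denotes the entry-wise extension of $\alpha$ to periodic matrices. The center of $M_s^p(\mathbb{F})$ is $\mathbb{F}$, being the directed union of the centers of the subalgebras $M_n^p(\mathbb{F})\cong M_n(\mathbb{F})$; hence $\theta$ restricts to some $\alpha\in\text{Aut}(\mathbb{F})$ on the center, and then $\tau:=\theta\circ\hat{\alpha}^{-1}$ fixes $\mathbb{F}$ pointwise and so lies in $\text{Aut}_{\mathbb{F}}(M_s^p(\mathbb{F}))$, yielding $\theta=\tau\circ\hat{\alpha}$. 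Combining with the first step, every $\sigma$ equals $\tau\circ\hat{\alpha}$ or $\tau\circ\hat{\alpha}\circ\psi$.

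To attain the stated order $H\cdot\text{Aut}_{\mathbb{F}}(M_s^p(\mathbb{F}))\cdot\text{Aut}(\mathbb{F})$ in the twisted case I would verify two commutation facts. First, $\hat{\alpha}$ commutes with $\psi$, since the entry-wise action of $\alpha$ commutes with transpose $t$ and with matrix inversion. Second, conjugation by $\psi$ preserves $\text{Aut}_{\mathbb{F}}(M_s^p(\mathbb{F}))$: using $((g^{-1})^t)^{-1}=g^t$ together with $\tau(x^{-1})=\tau(x)^{-1}$, one computes $\psi^{-1}\tau\psi=(t\circ\tau\circ t)|_{SL_s^p(\mathbb{F})}$, and $t\circ\tau\circ t$ is again an $\mathbb{F}$-algebra automorphism as a composition anti-automorphism $\circ$ automorphism $\circ$ anti-automorphism. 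Consequently $\tau\circ\hat{\alpha}\circ\psi=\tau\circ\psi\circ\hat{\alpha}=\psi\circ(\psi^{-1}\tau\psi)\circ\hat{\alpha}$, which is the desired factorization. The reverse inclusion is immediate since ring automorphisms of $M_s^p(\mathbb{F})$ preserve its commutator subgroup, and faithfulness of the embeddings of $\text{Aut}_{\mathbb{F}}(M_s^p(\mathbb{F}))$ and $\text{Aut}(\mathbb{F})$ follows because the elementary transvections $I+\lambda e_{ij}$, $i\neq j$, lie in $SL_s^p(\mathbb{F})$ and additively span all of $M_s^p(\mathbb{F})$. I expect the main technical point to be the conjugation identity $\psi^{-1}\text{Aut}_{\mathbb{F}}(M_s^p(\mathbb{F}))\psi=\text{Aut}_{\mathbb{F}}(M_s^p(\mathbb{F}))$: without it, Theorem \ref{Th_2_per_matrix} would give a decomposition only with $\psi$ on the right.
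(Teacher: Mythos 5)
Your proof is correct and follows essentially the same route as the paper: reduce to Theorem \ref{Th_2_per_matrix} to obtain the factor $H$, then split a ring automorphism of $M_s^p(\mathbb{F})$ into an $\mathbb{F}$-algebra automorphism and a lifted field automorphism by restricting to the center $\mathbb{F}\cdot 1$. The only difference is that you explicitly verify the commutation facts needed to move $\psi$ to the left (namely $\hat{\alpha}\psi=\psi\hat{\alpha}$ and $\psi^{-1}\,\text{Aut}_{\mathbb{F}}\big(M_s^p(\mathbb{F})\big)\,\psi=\text{Aut}_{\mathbb{F}}\big(M_s^p(\mathbb{F})\big)$) and the faithfulness of the restriction to $SL_s^p(\mathbb{F})$, points the paper passes over silently; this is a harmless refinement rather than a different argument.
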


\section{Isomorphisms of invertible elements groups of unital locally matrix algebras}

Description of isomorphisms of groups $SL_n(\mathbb{F})$ and $GL_n(\mathbb{F})$ is well known; see \cite{Borel}. It is also  easy to see that a group  $SL_n(\mathbb{F})$  is not isomorphic to the union of an infinite ascending chain  $$SL_{n_1}(\mathbb{F})  \subset SL_{n_2}(\mathbb{F})\subset \cdots, \quad n_1 < n_2 < \cdots . $$  Therefore, in proofs of Theorems \ref{Th_1_per_matrix},  \ref{Th_2_per_matrix} and Lemma \ref{lemma1_per_matrix} (see below) we will assume that the algebras $A$ and $B$ are infinite-dimensional. Hence, there exists a matrix subalgebra $1\in M_n(\mathbb{F})\subset A,$ $n \geq 4.$ Let $A'$ be the centralizer of the subalgebra $M_n(\mathbb{F})$ in $A$. By Joseph H.~M.~Wedderburn's Theorem (see \cite{Drozd_Kirichenko,Gol_Mikh_1,Pierce}), the algebra $A$ is isomorphic to $$  M_n(\mathbb{F}) \otimes_{\mathbb{F}} A' \cong M_n(A'). $$ We will identify the algebra $A$ with $M_n(A').$

Recall that for an arbitrary associative ring $R$ with $1$ and an arbitrary positive integer $k \geq 2$ the \emph{elementary linear group} $E_k (R)$ is the group generated by all transvections $$t_{ij} (a)=I_k + e_{ij} (a), $$ where $I_k$ is the identity $(k\times k)$-matrix, $1 \leq i\neq j \leq k,$ $a\in R,$ and $e_{ij}(a)$ is the  $(k\times k)$-matrix that has the element $a$ at the intersection of the $i$-th row and $j$-th column and zeros everywhere else. 
 
\begin{lemma}\label{lemma1_per_matrix}	$[A^{*},A^{*}]=E_n(A').$\end{lemma}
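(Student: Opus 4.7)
The plan is to prove the two inclusions $E_n(A') \subseteq [A^*,A^*]$ and $[A^*,A^*] \subseteq E_n(A')$ separately. For the first inclusion, since $n \geq 4$ there are at least three indices available, and the classical transvection identity
\[
t_{ij}(a) \;=\; [\,t_{ik}(a),\, t_{kj}(1)\,], \qquad i,j,k \text{ distinct},\ a \in A',
\]
exhibits every generator of $E_n(A')$ as a commutator inside $A^{*} = GL_n(A')$. This direction is routine.

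For the reverse inclusion I would exploit the locally matrix structure to reduce to a classical finite-dimensional statement. The first point is that the centralizer $A'$ of $M_n(\mathbb{F})$ in $A$ is itself a unital locally matrix algebra: given any finite subset $S \subset A'$, enclose $S$ together with the matrix units of $M_n(\mathbb{F})$ in some $M_m(\mathbb{F}) \subseteq A$ containing $1$; since $n \mid m$ one has $m = nk$ and $M_m(\mathbb{F}) \cong M_n(\mathbb{F}) \otimes M_k(\mathbb{F})$, the second tensor factor sitting inside $A'$ and absorbing $S$. Now given any $g,h \in A^{*} = GL_n(A')$, apply this to the finite set of entries of $g, g^{-1}, h, h^{-1}$, obtaining some $M_k(\mathbb{F}) \subseteq A'$ which contains all of them. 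Then $g, h \in GL_n(M_k(\mathbb{F})) \cong GL_{nk}(\mathbb{F})$, whence
\[
[g,h] \;\in\; [\,GL_{nk}(\mathbb{F}),\, GL_{nk}(\mathbb{F})\,] \;=\; SL_{nk}(\mathbb{F}) \;=\; E_{nk}(\mathbb{F}),
\]
the two equalities being classical for fields with $\text{char}\,\mathbb{F} \neq 2,3$ and $nk \geq 3$.

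The remaining step is the inclusion $E_{nk}(\mathbb{F}) \subseteq E_n(M_k(\mathbb{F}))$ under the identification $M_{nk}(\mathbb{F}) = M_n(M_k(\mathbb{F}))$. An elementary generator $I_{nk} + \lambda E_{pq}$, with $p = (i-1)k + r$ and $q = (j-1)k + s$, falls into two cases. If $i \neq j$ it is already the transvection $I_n + (\lambda E_{rs})\,e_{ij}$ living in $E_n(M_k(\mathbb{F}))$. If $i = j$ (so $r \neq s$) I would realize the intra-block element $I_n + \lambda E_{rs}\,e_{ii}$ as the single commutator
\[
\bigl[\,t_{ij'}(\lambda E_{rs}),\ t_{j'i}(E_{ss})\,\bigr] \;=\; I_n + \lambda E_{rs}\,e_{ii},
\]
for any auxiliary $j' \neq i$: direct computation using $E_{ss}\cdot(\lambda E_{rs}) = 0$ and $(\lambda E_{rs})^2 = 0$ kills all would-be off-diagonal correction terms, leaving exactly the required intra-block modification. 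This places $E_{nk}(\mathbb{F})$ inside $E_n(M_k(\mathbb{F})) \subseteq E_n(A')$, so $[g,h] \in E_n(A')$, completing $[A^{*},A^{*}] \subseteq E_n(A')$. The principal obstacle is therefore administrative — arranging the reductions so that the finite-dimensional approximating subalgebra is compatible with the fixed $M_n(\mathbb{F})$ — rather than any deep computation; once that is done, everything follows from Whitehead-type commutator manipulations and the classical identification $[GL_{nk}(\mathbb{F}), GL_{nk}(\mathbb{F})] = E_{nk}(\mathbb{F})$.
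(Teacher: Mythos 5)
Your proof is correct and follows essentially the same route as the paper's: the easy inclusion via the Steinberg commutator identity, and the reverse inclusion by localizing to a finite matrix subalgebra $M_{nk}(\mathbb{F})\supseteq M_n(\mathbb{F})$, invoking $[GL_{nk}(\mathbb{F}),GL_{nk}(\mathbb{F})]=E_{nk}(\mathbb{F})$, and then placing each elementary generator in $E_n(M_k(\mathbb{F}))$ by distinguishing off-block from intra-block positions. The only (immaterial) difference is the identity used for intra-block generators: you use the opposite-root commutator $[\,t_{ij'}(\lambda E_{rs}),t_{j'i}(E_{ss})\,]$, whereas the paper writes $t_{ij}(\alpha)=[\,t_{im}(1),t_{mj}(\alpha)\,]$ with an auxiliary index $m$ in a different block; both computations check out.
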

 
\begin{proof} Consider an arbitrary transvection $t_{ij}(a),$ $1 \leq i\neq j \leq n,$ $a\in A'.$ There exists a positive integer $r,$ $1 \leq r\leq n ,$ that is distinct from $i$ and $j.$ Then $t_{ij}(a)=[t_{ir}(1),t_{rj}(a)].$ We showed that $E_n(A')\subseteq [A^{*},A^{*}].$
	
Now, consider an arbitrary element $g\in [A^{*},A^{*}].$  There exists a subalgebra $M_n(\mathbb{F})\subset M_q(\mathbb{F})\subset  A$ such that $g\in SL_q(\mathbb{F})=E_q (\mathbb{F}).$ Consider a transvection $t_{ij}(\alpha)$	of the algebra $M_q(\mathbb{F}),$ $1 \leq i\neq j \leq q,$ $\alpha \in \mathbb{F}.$

The algebra $M_n(\mathbb{F})$ is embedded in the algebra  $M_q(\mathbb{F})$ diagonally, $$ a\rightarrow \text{diag}(\underbrace{a,a,\ldots, a}_k), \quad k=\frac{q}{n}, \quad a\in M_n(\mathbb{F}).$$ Hence, the matrix unit $e_{ii}(1)$ of the algebra $M_n(\mathbb{F}),$ $1 \leq i \leq n,$ is mapped into the element $$\overline{e}_i= \text{diag}(\underbrace{e_{ii}(1),\ldots, e_{ii}(1)}_k) \in  M_q(\mathbb{F}).$$  We have $\overline{e}_i M_q(\mathbb{F}) \overline{e}_i \cong M_k(\mathbb{F})$ and the algebra $M_q(\mathbb{F})$ can be identified with the algebra $M_n\big(M_k(\mathbb{F})\big),$ where $M_k(\mathbb{F})\cong A' \cap M_q(\mathbb{F})$ is the centralizer of the subalgebra $M_n(\mathbb{F})$ in $M_q(\mathbb{F}).$

Consider integers $l $ and $r $  such  that $(l-1)n < i \leq ln,$ $(r-1)n < j \leq rn,$ and let $\overline{i}= i - (l-1)n,$  $\overline{j}= j - (r-1)n.$ Then $$ e_{ij}(\alpha)= \overline{e}_{\overline{i}}\cdot e_{ij}(\alpha)\cdot \overline{e}_{\overline{j}}. $$ If $i-j$ is not divisible by $n$, then $\overline{i}\neq \overline{j}$ and, therefore, $t_{ij}(\alpha)$ is a   transvection of the ring $M_n\big( M_k(\mathbb{F})\big).$ 

If $i-j$ is  divisible by $n$, then there exists an integer  $m,$ $1 \leq m\leq q,$ such that $i-m$  is not divisible by $n$. In this case $t_{ij}(\alpha)=[t_{im}(1),t_{mj}(\alpha)]. $ In any case $t_{ij}(\alpha)\in E_n(A').$ This completes the proof of the lemma. \end{proof}

I.~Z.~Golubchik  and A.~V.~Mikhalev \cite{Gol_Mikh_1}, and E.~I.~Zelmanov \cite{Zel_1} described isomorphisms of elementary linear groups over rings.

\begin{theorem}[\textbf{I.~Z.~Golubchik, A.~V.~Mikhalev, E.~I.~Zelmanov}]\label{GMZ_1} Let $R,$ $S$ be rings with $\frac{1}{6},$  and let $n \geq 4,$ $m \geq 4$ be integers. If $\varphi : E_n(R) \rightarrow E_m(S)$ is an isomorphism of elementary linear groups, then there exist central idempotents $e,$ $f$ in the matrix rings $M_n(R),$ $M_m(S),$ respectively, an isomorphism $\theta_1 : eM_n(R)  \rightarrow fM_m(S)$ and an anti-isomorphism $\theta_2 : (1-e)M_n(R)  \rightarrow (1-f)M_m(S)$ such that $$\varphi(g)=\theta_1(eg)+ \theta_2((1-e)g^{-1}) \quad \text{for an arbitrary element} \quad g\in E_n(R).$$ \end{theorem}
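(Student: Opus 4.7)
The plan is to reconstruct the ring structure of $R$ and $S$ group-theoretically inside $E_n(R)$ and $E_m(S)$ as families of ``long root subgroups,'' and then to argue that $\varphi$ must respect this structure up to the opposition automorphism $g \mapsto (g^{-1})^t$, possibly on different central summands.

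For each ordered pair $i \neq j$ the set $U_{ij} = \{\, t_{ij}(a) : a \in R\,\}$ is an abelian subgroup isomorphic as a group to $(R,+)$, and the commutator identities
\[
[\, t_{ij}(a),\, t_{jk}(b)\,] = t_{ik}(ab) \quad (i,j,k \text{ distinct}),
\]
together with $[\,t_{ij}(a), t_{kl}(b)\,] = 1$ when $\{i,j\} \cap \{k,l\} = \emptyset$, encode the full multiplication of $R$. The first step is therefore a purely group-theoretic characterization of the collection $\{U_{ij}\}$ inside $E_n(R)$. Since $n \geq 4$, this can be done via Zelmanov's Jordan-pair technique: one recognizes the root subgroups as unipotent radicals associated to suitable involutions (or $\mathrm{SL}_2$-triples) in $E_n(R)$, and the hypothesis $\tfrac{1}{6} \in R,S$ provides the invertibilities needed for the Jordan-theoretic splitting.

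Once the root subgroups on both sides are recognized intrinsically, $\varphi$ must send the system $\{U_{ij}\}$ to a system of root subgroups in $E_m(S)$, inducing an isomorphism of the underlying root systems $A_{n-1}$ and $A_{m-1}$ (which in particular forces $m=n$). The automorphism group of $A_{n-1}$ is generated by its Weyl group $S_n$ (realized inside the elementary group by conjugation with permutation matrices) and the opposition involution $\alpha \mapsto -\alpha$, which in $E_n(R)$ is realized by $g \mapsto (g^{-1})^t$ and swaps $U_{ij}$ with $U_{ji}$. After normalizing by inner automorphisms, the restrictions $\varphi|_{U_{ij}}$ give a coherent family of additive bijections $R \to S$ which, via the commutator identities, assemble into either a ring isomorphism $\theta_1 : M_n(R) \to M_m(S)$ (when no opposition occurs) or, after inversion, an anti-isomorphism $\theta_2$ in the opposite case.

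The main obstacle, and the reason the statement involves central idempotents $e$ and $f$, is that $\varphi$ need not be of pure type: on one direct summand of $M_n(R)$ it may behave as a ring isomorphism, while on a complementary summand it behaves as an anti-isomorphism composed with inversion. To handle this I would first locate the center $Z(M_n(R))$ group-theoretically, transport central idempotents through $\varphi$ using the fact that conjugation by a central element acts trivially on every root subgroup, and then reduce block-by-block to the two pure cases. Reassembling the blocks yields the stated decomposition $\varphi(g) = \theta_1(eg) + \theta_2((1-e)g^{-1})$; the delicate point is that the whole idempotent decomposition must be detected from the group structure alone, which is precisely where the invertibility of $2$ and $3$ is used throughout.
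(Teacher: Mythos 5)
First, a point of context: the paper does not prove this statement at all. Theorem~\ref{GMZ_1} is quoted verbatim as a known result of Golubchik--Mikhalev \cite{Gol_Mikh_1} and Zelmanov \cite{Zel_1} and is used as a black box in the proofs of Theorems~\ref{Th_1_per_matrix} and~\ref{Th_2_per_matrix}. So there is no internal proof to compare yours against; what can be judged is whether your sketch would stand on its own.

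It would not, because its first and central step is false as stated. You claim that the family of root subgroups $\{U_{ij}\}$ admits a purely group-theoretic characterization inside $E_n(R)$, that $\varphi$ must therefore carry root subgroups to root subgroups, and that the induced isomorphism of root systems $A_{n-1}\to A_{m-1}$ forces $m=n$. None of this can be right, because the root-subgroup decomposition is an artifact of the chosen matrix realization, not of the abstract group. Concretely, for any ring $R$ with $1$ and any $k\geq 1$ one has $E_{nk}(R)=E_n\big(M_k(R)\big)$ as subgroups of $GL_{nk}(R)$: block transvections decompose into commuting elementary transvections, and conversely an elementary transvection inside a diagonal block is a commutator of off-block transvections --- this is essentially the content of the paper's own Lemma~\ref{lemma1_per_matrix}. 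The identity map is then an isomorphism $E_n\big(M_k(R)\big)\to E_{nk}(R)$ with $n\neq nk$, and under it the root subgroups of $E_{nk}(R)$ (copies of $(R,+)$, some of them lying inside diagonal blocks) do not correspond to root subgroups of $E_n\big(M_k(R)\big)$ (copies of $(M_k(R),+)$ supported on off-diagonal blocks). This refutes both the intrinsic recoverability of the $U_{ij}$ and the conclusion $m=n$, and it is exactly why the theorem is stated the way it is: the conclusion concerns the full matrix rings $M_n(R)\to M_m(S)$, with no claim that $n=m$ or that $R\cong S$. The actual proofs must work with genuinely invariant structures (Zelmanov's argument passes to the Lie and Jordan structures attached to involutions and their Peirce decompositions, which is where $\frac{1}{6}$ enters), not with the root subgroups themselves. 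Your treatment of the idempotents has the same difficulty: $e$ and $f$ live in $M_n(R)$ and $M_m(S)$, not in the groups $E_n(R)$, $E_m(S)$, so they cannot be ``transported through $\varphi$''; producing them from the group data is a substantial part of the original arguments, not a routine reduction.
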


\begin{proof}[Proof of Theorem \emph{\ref{Th_1_per_matrix}}] Let $A$ and $B$ be unital locally matrix algebras, and let $\varphi: [A^{*},A^{*}] \rightarrow [B^{*},B^{*}]$ be an isomorphism. As above, we assume that algebras $A$ and $B$ are infinite-dimensional. Choose positive integers $n \geq 4$ and $m \geq 4$ dividing the Steinitz numbers $\textbf{st}(A)$ and $\textbf{st}(B),$ respectively. Then  algebras $A$ and $B$ can be identified with matrix algebras $M_n(A')$ and $M_m(B'),$ where $A',$ $B'$ are centralizers of  subalgebras $1\in M_n(\mathbb{F})$ and $1\in M_m(\mathbb{F})$ in $A$ and $B,$ respectively.
	
By Lemma \ref{lemma1_per_matrix}, $$[A^{*},A^{*}] = E_n(A'), \quad  [B^{*},B^{*}] = E_m(B'),  $$ and $\varphi : E_n(A') \rightarrow E_m(B') $ is an isomorphism. Since  algebras $A,$ $B$ are simple, their only central idempotents are $0$ and $1.$ By the  Golubchik-Mikhalev-Zelmanov Theorem (see Theorem \ref{GMZ_1}), there exists an isomorphism $\theta_1: A \rightarrow B$ such that $\varphi(g)= \theta_1(g)$ for an arbitrary $g\in [A^{*},A^{*}]  $  or there  exists an anti-isomorphism $\theta_2 :A \rightarrow B$ such that $$\varphi(g) = \theta_2(g^{-1}) \quad \text{for an arbitrary element} \quad  g\in [A^{*},A^{*}].$$ This completes the proof of Theorem \ref{Th_1_per_matrix}. 
\end{proof}

\begin{proof}[Proof of Theorem \emph{\ref{Th_2_per_matrix}}] Let $A,$ $B$ be countable-dimensional unital locally matrix algebras, and let $\varphi: [A^{*},A^{*}] \rightarrow [B^{*},B^{*}]$ be an isomorphism. We have already mentioned that the algebra $A$ can be identified with the algebra $M_s^p (\mathbb{F}),$ where $s=\textbf{st}(M_s^p (\mathbb{F})),$  and that the algebra $A=M_s^p (\mathbb{F})$ is invariant with respect to the transpose $t.$
	
If  $\theta: A \rightarrow B$ is an anti-isomorphism, then the mapping $\theta\,': A \rightarrow B,$ $\theta\,'(a)=\theta(a^t)$ is an isomorphism and $\theta(g^{-1})=\theta' \big((g^{-1})^t\big) $ for an arbitrary element $g\in [A^{*},A^{*}].$ This completes the proof of Theorem \ref{Th_2_per_matrix}. 
 \end{proof}

\begin{lemma}\label{lemma2_per_matrix} Let $A$ and $B$ be countable-dimensional unital locally matrix algebras. If $A$ and $B$ are isomorphic as rings, then they are isomorphic as $\mathbb{F}$-algebras.
 \end{lemma}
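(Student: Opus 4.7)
The plan is to invoke J.~G.~Glimm's classification stated above: countable-dimensional unital locally matrix $\mathbb{F}$-algebras are $\mathbb{F}$-isomorphic if and only if they share the same Steinitz number. So it is enough to show that a ring isomorphism $\varphi:A\to B$ forces $\mathbf{st}(A)=\mathbf{st}(B)$. The only subtlety is that $\varphi$ need not be $\mathbb{F}$-linear, so one cannot directly transport $\mathbb{F}$-subalgebras of type $M_n(\mathbb{F})$ from $A$ to $B$.

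To circumvent this, I would first extract a field automorphism from $\varphi$ via the center. The center of $A$ equals $\mathbb{F}\cdot 1_A$: any central $z\in A$ sits inside some unital subalgebra $A'\cong M_n(\mathbb{F})$, and since $z$ both commutes with $A'$ and lies in $A'$, it lies in the center of $A'$, which is $\mathbb{F}\cdot 1_A$; likewise for $B$. A ring isomorphism sends centers to centers, so $\varphi$ induces a field automorphism $\sigma:\mathbb{F}\to\mathbb{F}$ characterized by $\varphi(\alpha\cdot 1_A)=\sigma(\alpha)\cdot 1_B$, and consequently $\varphi(\alpha a)=\sigma(\alpha)\varphi(a)$ for all $\alpha\in\mathbb{F}$ and $a\in A$.

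Now endow the underlying ring of $B$ with a twisted $\mathbb{F}$-algebra structure $B^{\sigma}$ by the rule $\alpha\cdot_\sigma b:=\sigma(\alpha)\,b$. Then $\varphi(\alpha a)=\sigma(\alpha)\varphi(a)=\alpha\cdot_\sigma\varphi(a)$, so $\varphi:A\to B^{\sigma}$ becomes an $\mathbb{F}$-algebra isomorphism. A routine check shows that $B^{\sigma}$ is again a countable-dimensional unital locally matrix $\mathbb{F}$-algebra with the same Steinitz number as $B$: given any $\mathbb{F}$-subalgebra embedding $\psi:M_n(\mathbb{F})\hookrightarrow B$, composing with the entrywise application of $\sigma$ on $M_n(\mathbb{F})$ yields an $\mathbb{F}$-subalgebra embedding $M_n(\mathbb{F})\hookrightarrow B^{\sigma}$, and the symmetric argument (using $\sigma^{-1}$) gives the reverse inclusion, whence $\mathbf{st}(B^{\sigma})=\mathbf{st}(B)$. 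Combining, $\mathbf{st}(A)=\mathbf{st}(B^{\sigma})=\mathbf{st}(B)$, and Glimm's theorem produces an $\mathbb{F}$-algebra isomorphism $A\cong B$. The main potential obstacle is the bookkeeping in the twist step---making sure the direction of $\sigma$ is chosen so that $\varphi$ becomes $\mathbb{F}$-linear while the set $D(\cdot)$ remains unchanged---but once $\sigma$ has been identified from the action of $\varphi$ on the centers, everything else is formal.
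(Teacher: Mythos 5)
Your argument is correct, and its core is the same as the paper's: both proofs rest on the observation that the center of a unital locally matrix algebra is $\mathbb{F}\cdot 1$, so a ring isomorphism induces a field automorphism $\sigma$ of $\mathbb{F}$ that measures its failure to be $\mathbb{F}$-linear, after which one ``untwists.'' The difference is in how the untwisting is finished. The paper first invokes Glimm's theorem to identify $B$ with $M_s^p(\mathbb{F})$, extends $\sigma^{-1}$ entrywise to a ring automorphism $\widetilde{\sigma}^{-1}$ of $M_s^p(\mathbb{F})$, and exhibits the $\mathbb{F}$-algebra isomorphism explicitly as the composition of $\theta$ with $\widetilde{\sigma}^{-1}$; you instead twist the scalar action on $B$ to form $B^{\sigma}$, check that $D(B^{\sigma})=D(B)$ so that $\mathbf{st}(A)=\mathbf{st}(B^{\sigma})=\mathbf{st}(B)$, and then invoke Glimm's classification to conclude. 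Your route is slightly more abstract and avoids choosing a concrete realization of $B$ before the final step, at the cost of using Glimm's theorem as a black box to produce the isomorphism rather than writing it down; the verification that the twist preserves the set $D(\cdot)$ is carried out correctly (the image of a unital copy of $M_n(\mathbb{F})$ in $B$ is, as a set, still a unital copy of $M_n(\mathbb{F})$ in $B^{\sigma}$ via precomposition with the entrywise $\sigma$). Either version is a complete proof.
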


\begin{proof} Since the algebra $B$ is countable-dimensional, without loss of generality, we can assume that $B=M_s^p(\mathbb{F}),$ where $s=\textbf{st}(M_s^p(\mathbb{F})).$ An arbitrary automorphism $\tau$ of the field $\mathbb{F}$ extends to an automorphism $\widetilde{\tau}$ of the ring $M_s^p(\mathbb{F}),$ $$\widetilde{\tau}: (a_{ij})_{\mathbb{N}\times \mathbb{N}} \mapsto \big( \, \tau (a_{ij})\, \big)_{\mathbb{N}\times \mathbb{N}}\,.$$  
	
Let  $\theta: A \rightarrow B$ be  an isomorphism of rings. Then $\theta$ maps  the center $\mathbb{F} \cdot 1_A$ of the algebra $A$ to the center  $\mathbb{F} \cdot 1_B$ of the algebra $B.$ Then there exists an automorphism $\tau$ of the field $\mathbb{F}$ such that $\theta(\alpha \cdot 1_A)= \tau(\alpha) \cdot 1_B$ for an arbitrary element $\alpha\in \mathbb{F}.$ The composition $\theta \circ \widetilde{\tau}^{-1}  $ is an isomorphism of $\mathbb{F}$-algebras $A\rightarrow B.$ This completes the proof of the lemma.
 \end{proof}

Before we prove Theorems \ref{Th_4_per_matrix} and \ref{Th_5_per_matrix} we will show how central homotheties arise in locally matrix algebras.

Let  $s$ be a Steinitz number. Suppose that for all integers $n \geq 1$ such that $n|s$ there exist homomorphisms $\tau_n: \mathbb{F}^{*}\rightarrow \mathbb{F}^{*} $ such that 
\begin{enumerate}
	\item[(i)] $\big(\tau_n(\alpha)\big)^n = \alpha$ for all $\alpha \in  \mathbb{F}^{*},$ 
		\item[(ii)] if $m|n,$ $n|s,$ $n=m\cdot k,$ then  $\tau_n(\alpha)=\tau_k\big(\tau_m(\alpha)\big)$ for all $\alpha \in  \mathbb{F}^{*}.$
\end{enumerate} 

For example, if $\mathbb{F}=\mathbb{R}$ is the field of real numbers and $s$ is a Steinitz number that is not divisible by $2,$ then the mapping $a \mapsto a^{\frac{1}{n}},$ $a\in\mathbb{R}, $ is well defined and satisfies $\text{(i)},$ $\text{(ii)}.$ 

For an arbitrary element $a\in A,$ we will define its \textit{relative determinant} $\text{det}_r(a)$ in the following way. There exists a matrix algebra $M_n(\mathbb{F})\subset A$ such that $1, a\in M_n(\mathbb{F}).$ Let $\text{det}_{M_n(\mathbb{F})}(a)$ be the determinant of the matrix $a$ in $M_n(\mathbb{F}).$  Then  $$\text{det}_r(a)= \tau_n \big(\text{det}_{M_n(\mathbb{F})}(a) \big)$$ does not depend on a choice of the subalgebra $M_n(\mathbb{F}).$ Indeed, if $1, a\in M_m(\mathbb{F}) \subset M_n(\mathbb{F}),$ then the subalgebra $M_m(\mathbb{F})$ is embedded in $M_n(\mathbb{F})$ diagonally. Hence, $$\text{det} _{M_n(\mathbb{F})}(a)=\big(\text{det}_{M_m(\mathbb{F})}(a)\big)^k, \quad k=n/m.$$ Denote $\alpha=\text{det}_{M_m(\mathbb{F})}(a).$ Then, by (i) and (ii), $$\tau_n (\alpha^k)=\tau_m \big(\tau_k (\alpha^k) \big)= \tau_m (\alpha).$$

It is easy to see that $\text{det}_r(ab)=\text{det}_r(a) \text{det}_r(b)$ for arbitrary elements $ a,b \in A,$ and $ \text{det}_r(1)=1.$ An element $a\in A$ is invertible if and only if $\text{det}_r(a)\neq 0.$ The mapping $$A^{*} \rightarrow \mathbb{F}^{*} , \quad a \mapsto \text{det}_r(a),$$ is central homothety.

Let $R$ be a ring with $1,$ and let $n \geq 2$ be an integer. Along with the elementary linear group $E_n(R)$ consider the group $GE_n(R)$ that is generated by $E_n(R)$ and all invertible diagonal matrices over $R.$

The following result is due to I.~Z.~Golubchik  and A.~V.~Mikhalev~\cite{Gol_Mikh_1}, and 	E.~I.~Zelmanov~\cite{Zel_1}.
\begin{theorem}[\textbf{I.~Z.~Golubchik, A.~V.~Mikhalev, E.~I.~Zelmanov}]\label{GMZ_2} Let $R,$ $S$ be rings with $\frac{1}{6},$   let $m\geq 4,$ $n\geq 4$ be integers, and let $\varphi :GE_n(R)\rightarrow GE_m(S)  $ be an isomorphism. Then there exist central idempotents $e$ and $f$ of  matrix rings $M_n(R)$ and $M_m(S),$ respectively, an isomorphism $\theta_1 : eM_n(R)\rightarrow f M_m(S), $ an anti-isomorphism $\theta_2 : (1-e) M_n(R)\rightarrow (1-f) M_m(S) $ and a homomorphism $\chi: GE_n(R) \rightarrow Z(GE_m(S))$ to the center of the group $GE_m(S)$ such that $$\varphi(g)=\chi(g) \big(\theta_1 (eg)+ \theta_2((1-e)g^{-1})\big) \quad \text{for an arbitrary element} \quad g\in GE_n(R). $$ 
\end{theorem}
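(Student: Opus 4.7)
The plan is to reduce to Theorem \ref{GMZ_1} applied to the restriction of $\varphi$ to the elementary subgroup, and then to recover the full formula by showing that the discrepancy between $\varphi$ and the natural extension lies in the center of $GE_m(S)$.

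First I would show that every isomorphism $\varphi: GE_n(R) \to GE_m(S)$ carries $E_n(R)$ onto $E_m(S)$. This requires a purely group-theoretic recognition of $E_n(R)$ inside $GE_n(R)$. Under the hypotheses $n\geq 4$ and $\frac{1}{6}\in R$, the subgroup $E_n(R)$ is perfect — each transvection is a commutator of transvections, as in Lemma \ref{lemma1_per_matrix} — and can be singled out from $GE_n(R)$ as the subgroup generated by those elements $u$ for which $u-1$ is ``rank one'' in a sense detectable from the commutation behaviour with transvections. Once $\varphi(E_n(R)) = E_m(S)$ is established, Theorem \ref{GMZ_1} applied to the restriction supplies central idempotents $e\in M_n(R)$, $f\in M_m(S)$, an isomorphism $\theta_1: eM_n(R)\to fM_m(S)$, and an anti-isomorphism $\theta_2: (1-e)M_n(R)\to (1-f)M_m(S)$ with $\varphi(h)=\theta_1(eh)+\theta_2((1-e)h^{-1})$ for every $h\in E_n(R)$.

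Next I would define $\psi: GE_n(R)\to GE_m(S)$ by the same formula, $\psi(g)=\theta_1(eg)+\theta_2((1-e)g^{-1})$. Using that $e$ is central, that $\theta_1$ is a ring isomorphism, that $\theta_2$ is an anti-isomorphism, and that group inversion is an anti-homomorphism (so the two order-reversals cancel on the $\theta_2$-summand), a direct calculation in which cross terms vanish because $fM_m(S)\cdot (1-f)M_m(S)=0$ shows that $\psi$ is a group homomorphism agreeing with $\varphi$ on $E_n(R)$. Set $\chi(g)=\varphi(g)\psi(g)^{-1}$. For any $h\in E_n(R)$ the conjugate $ghg^{-1}$ lies again in $E_n(R)$, so applying $\varphi$ and using $\varphi|_{E_n(R)}=\psi|_{E_n(R)}$ yields
$$\varphi(g)\psi(h)\varphi(g)^{-1}=\psi(g)\psi(h)\psi(g)^{-1},$$
which rearranges to $\chi(g)\psi(h)=\psi(h)\chi(g)$ for every $h\in E_n(R)$. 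Since $\psi(E_n(R))=E_m(S)$, the element $\chi(g)$ commutes with $E_m(S)$, and an elementary computation shows that the centralizer of $E_m(S)$ in $M_m(S)$ consists exactly of the scalar matrices over $Z(S)$, which coincides with $Z(GE_m(S))$. Thus $\chi$ takes values in the center, is automatically a group homomorphism, and yields the desired formula $\varphi(g)=\chi(g)\bigl(\theta_1(eg)+\theta_2((1-e)g^{-1})\bigr)$.

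The main obstacle is the first step, the intrinsic recognition of $E_n(R)$ inside $GE_n(R)$. For general rings $R$ the quotient $GE_n(R)/E_n(R)$ is a non-trivial Whitehead-type $K_1$ group, so $E_n(R)$ is neither the derived subgroup nor easily read off from obvious invariants, and it is not immediate that an abstract isomorphism of the full groups respects the elementary subgroups. Overcoming this will require characterising transvections by their commutation patterns with sufficiently many abelian subgroups of $GE_n(R)$; once this is done, the remainder of the argument is an application of Theorem \ref{GMZ_1} together with the centralizer computation described above.
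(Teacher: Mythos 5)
This statement is quoted from Golubchik--Mikhalev and Zelmanov; the paper gives no proof of it, so there is no internal argument to compare against. Judged on its own terms, your reduction to Theorem \ref{GMZ_1} is the natural one, and the second half of your argument is sound: $\psi(g)=\theta_1(eg)+\theta_2\big((1-e)g^{-1}\big)$ is a group homomorphism because $e$ and $f$ are central (so the cross terms vanish and $\theta_1(eg)\theta_1(eh)=\theta_1(egh)$), the two order reversals on the $\theta_2$-summand cancel, and $\chi=\varphi\psi^{-1}$ centralizes $E_m(S)$, whose centralizer in $M_m(S)$ is exactly the scalar matrices over $Z(S)$, i.e.\ $Z(GE_m(S))$; centrality of the values then makes $\chi$ a homomorphism for free.

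The gap is the step you yourself flag, and your diagnosis of it is mistaken in a way that matters. You assert that because $GE_n(R)/E_n(R)$ is a nontrivial Whitehead-type $K_1$ group, $E_n(R)$ is ``neither the derived subgroup nor easily read off.'' That is a non sequitur: nontriviality of the quotient is perfectly compatible with $E_n(R)$ being the derived subgroup, and in fact it is one. For $n\geq 3$ the quotient $GE_n(R)/E_n(R)$ is \emph{abelian}, since $E_n(R)$ is normalized by the diagonal subgroup and the commutator of two invertible diagonal matrices is $\mathrm{diag}\big([d_1,d_1'],\dots,[d_n,d_n']\big)$, which lies in $E_n(R)$ by Whitehead's lemma; hence $[GE_n(R),GE_n(R)]\subseteq E_n(R)$. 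Conversely $E_n(R)$ is perfect, via $t_{ij}(a)=[t_{ik}(1),t_{kj}(a)]$ exactly as in Lemma \ref{lemma1_per_matrix}, so $E_n(R)\subseteq [GE_n(R),GE_n(R)]$. Therefore $E_n(R)=[GE_n(R),GE_n(R)]$ is a characteristic-type subgroup carried onto $E_m(S)$ by any isomorphism $GE_n(R)\to GE_m(S)$, and no ``rank-one recognition'' of transvections is needed. With this observation inserted your argument closes; as written, it leaves its central step unresolved and rests on an incorrect claim about the derived subgroup.
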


Let $A$ be a unital locally matrix algebra,  let $1\in M_n(\mathbb{F}) \subset A,$ and let $n\geq 4.$ Consider the centralizer $A'$ of the subalgebra $M_n(\mathbb{F})$ in $A.$ As above, we identify the algebra $A$ with the matrix algebra $M_n(A').$

\begin{lemma}\label{lemma3_per_matrix} $A^{*}=GE_n (A').$
\end{lemma}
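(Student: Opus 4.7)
The inclusion $GE_n(A') \subseteq A^*$ is immediate, since transvections and invertible diagonal matrices over $A'$ are invertible in the ring $M_n(A') = A$. For the reverse inclusion, given $g \in A^*$, I would use the locally matrix property to pick a subalgebra $M_q(\mathbb{F}) \subseteq A$ with $M_n(\mathbb{F}) \subseteq M_q(\mathbb{F})$ and $g \in M_q(\mathbb{F})$. A unital embedding of matrix algebras forces $n \mid q$, so with $k = q/n$ we may reuse the identification $M_q(\mathbb{F}) = M_n\bigl(M_k(\mathbb{F})\bigr)$ from the proof of Lemma \ref{lemma1_per_matrix}, where $M_k(\mathbb{F}) \cong A' \cap M_q(\mathbb{F}) \subseteq A'$. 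Since $g$ is invertible in $A$ and $M_q(\mathbb{F})$ is a unital subalgebra, $g \in GL_q(\mathbb{F})$.

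Next I would split $g$ using the standard field factorization: write $g = h\,d$ with $d = \text{diag}(\det g, 1, \ldots, 1) \in M_q(\mathbb{F})$ and $h = g d^{-1} \in SL_q(\mathbb{F}) = E_q(\mathbb{F})$, and handle the two factors separately. For $h$: the argument inside the proof of Lemma \ref{lemma1_per_matrix} actually shows that every transvection of $M_q(\mathbb{F})$ lies in $E_n(A')$; thus $h \in E_q(\mathbb{F}) \subseteq E_n(A') \subseteq GE_n(A')$. For $d$: under the identification $M_q(\mathbb{F}) = M_n\bigl(M_k(\mathbb{F})\bigr)$, a direct unpacking of the block structure (where $\det g$ sits inside the $(1,1)$-block and the remaining diagonal blocks are identity) shows that $d$ corresponds to $\text{diag}(d_1, I_k, \ldots, I_k)$ with $d_1 = \text{diag}(\det g, 1, \ldots, 1) \in M_k(\mathbb{F})^*$. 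This is a diagonal matrix in $M_n(A')$ with invertible entries, hence $d \in GE_n(A')$. Combining, $g = h d \in GE_n(A')$.

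I expect no serious obstacle. The proof rests on two ingredients already in place: the locally matrix property, which traps $g$ inside a finite $M_q(\mathbb{F})$, and the decomposition technique established in Lemma \ref{lemma1_per_matrix}. The only point requiring care is the bookkeeping in the identification $M_q(\mathbb{F}) = M_n\bigl(M_k(\mathbb{F})\bigr)$ — one must verify that the diagonal matrix $d$ really is sent to a block-diagonal matrix with invertible blocks — but this is routine once the indexing conventions of Lemma \ref{lemma1_per_matrix} are recalled.
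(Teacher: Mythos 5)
Your proof is correct and follows essentially the same route as the paper: trap $g$ in a finite matrix subalgebra $M_q(\mathbb{F})\supseteq M_n(\mathbb{F})$, reduce to transvections (handled by the argument of Lemma \ref{lemma1_per_matrix}) and to the diagonal factor $\mathrm{diag}(\det g,1,\ldots,1)$, and check via the idempotent $\overline{e}_1$ that the latter is a block-diagonal matrix of $M_n(A')$ with invertible entries. The paper phrases the reduction as ``$GL_q(\mathbb{F})$ is generated by transvections and the matrices $d_{11}(\alpha)$'' rather than writing the explicit factorization $g=hd$, but this is the same argument.
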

\begin{proof} Let $g\in A^{*}.$ There exists a matrix subalgebra $M_q(\mathbb{F})\subset A$ such that $M_n(\mathbb{F}) \subset M_q(\mathbb{F}),$ $g\in M_q(\mathbb{F}).$ Then $g\in \big(M_q(\mathbb{F})\big)^{*}=GL_q(\mathbb{F}).$ It is well known that the group $GL_q(\mathbb{F})$ is generated by transvections and diagonal matrices $$ d_{11}(\alpha)= \text{diag}\, (\underbrace{\alpha,1,1, \ldots, 1}_q\,), \quad 0 \neq \alpha \in\mathbb{F}.$$
As in the proof of Lemma \ref{lemma1_per_matrix}, consider  matrix units $e_{ii}(1),$ $1 \leq i \leq n,$  of the algebra $M_n(\mathbb{F}).$ They are embedded in  $M_q(\mathbb{F})$ as idempotents 
 $$ \overline{e}_{i}= \text{diag}\, (\underbrace{e_{ii}(1), \ldots, e_{ii}(1)}_{q/n}\,).$$ Clearly, $$ d_{11}(\alpha)-I_q = \overline{e}_{1} \, \big(d_{11}(\alpha)-I_q \big)\, \overline{e}_{1}.$$ Hence, $d_{11}(\alpha)$ is an invertible diagonal matrix of $M_n\big( A' \cap M_q(\mathbb{F})\big),$ $d_{11}(\alpha)  \in GE_n(A').$ We showed that $GL_q(\mathbb{F})\subset GE_n(A')$ and, therefore, $g\in GE_n(A').$  This completes the proof of the lemma.
\end{proof}

Now, Theorems \ref{Th_4_per_matrix} and \ref{Th_5_per_matrix} immediately follow from simplicity of the algebras $A$ and $B,$ Lemma \ref{lemma3_per_matrix} and the results of I.~Z.~Golubchik  and A.~V.~Mikhalev~\cite{Gol_Mikh_1}, and 	E.~I.~Zelmanov~\cite{Zel_1} (Theorem~\ref{GMZ_2}).

\section{Isomorphisms of groups of  periodic  infinite matrices}

Now, let us describe the group of automorphisms of $SL_s^p(\mathbb{F}).$  

\begin{proof}[Proof of Theorem \emph{\ref{Th_6_per_matrix}}] Let $\text{Aut}_{\text{ring}}\big(M_s^p(\mathbb{F})\big)$ be the group of ring automorphisms of $M_s^p(\mathbb{F}).$ 
	
We claim that $$\text{Aut}\big(SL_s^p(\mathbb{F})\big)= H\cdot \text{Aut}_{\text{ring}}\big(M_s^p(\mathbb{F})\big).$$ Indeed, by Theorem \ref{Th_2_per_matrix}, an arbitrary automorphism  $\varphi\in \text{Aut}\big(SL_s^p(\mathbb{F})\big)$ either extends to an automorphism of the ring $M_s^p(\mathbb{F}),$ in which case $\varphi\in \text{Aut}_{\text{ring}}\big(M_s^p(\mathbb{F})\big),$ or there exists an automorphism $\theta \in\text{Aut}_{\text{ring}}\big(M_s^p(\mathbb{F})\big) $ such that  $$\varphi(g)=\theta\big( (g^{-1})^t\big) \quad \text{for all} \quad g\in SL_s^p(\mathbb{F}),$$ in which case $\varphi =\psi \circ\theta \in H \cdot\text{Aut}_{\text{ring}}\big(M_s^p(\mathbb{F})\big) .$ 

Let us show that $$\text{Aut}_{\text{ring}}\big(M_s^p(\mathbb{F})\big)=\text{Aut}_{\mathbb{F}}\big(M_s^p(\mathbb{F})\big)\cdot \text{Aut}(\mathbb{F}).$$ In the proof of Lemma \ref{lemma2_per_matrix} we showed that an arbitrary automorphism $\tau$ of the field $\mathbb{F}$ gives rise to the automorphism $\widetilde{\tau}$ of the ring $M_s^p(\mathbb{F}).$ The mapping $\tau \mapsto \widetilde{\tau}$ is an embedding of the group $\text{Aut}({\mathbb{F}})$ into the group $\text{Aut}_{\text{ring}}\big(M_s^p(\mathbb{F})\big).$

If $\varphi$ is an automorphism of the ring $M_s^p(\mathbb{F}),$ then its restriction $\varphi\,\big|\,_{\mathbb{F}\cdot 1}$ to the center $\mathbb{F} \cdot 1$ of the ring   $M_s^p(\mathbb{F})$ is an automorphism of the field $\mathbb{F},$ and it gives rise to the automorphism $\widetilde{\varphi\,\big|\,_{\mathbb{F}\cdot 1}}.$  Clearly, $$\varphi \cdot \big(\widetilde{\varphi\,\big|\,_{\mathbb{F}\cdot 1}} \big)^{-1} \in \text{Aut}_{\mathbb{F}}\big(M_s^p(\mathbb{F})\big),$$ which implies the assertion of  Theorem \ref{Th_6_per_matrix}.
\end{proof}

\end{document}